\chardef\bslash=`\\ 
\newtheorem{thm}{Theorem}[section]
\newtheorem{cor}[thm]{Corollary}
\newtheorem{lem}[thm]{Lemma}
\newtheorem{prop}[thm]{Proposition}
\theoremstyle{definition}
\newtheorem{rem}[thm]{Remark}
\theoremstyle{remark}
\newcommand{\eval}[2][\right]{\relax
  \ifx#1\right\relax \left.\fi#2#1\rvert}
\begin{document}
\title{The complexity of geometric scaling}

\author[A. Deza]{Antoine Deza}
\address{McMaster University, Hamilton, Ontario, Canada}
\email{deza@mcmaster.ca} 

\author[S. Pokutta]{Sebastian Pokutta}
\address{Zuse Institute Berlin, Germany}
\email{pokutta@zib.de}

\author[L. Pournin]{Lionel Pournin}
\address{Universit{\'e} Paris 13, Villetaneuse, France}
\email{lionel.pournin@univ-paris13.fr}

\begin{abstract}
Geometric scaling, introduced by Schulz and Weismantel in 2002, solves the integer optimization problem $\max \{c\mathord{\cdot}x: x \in P \cap \mathbb Z^n\}$ by means of primal augmentations, where $P \subset \mathbb R^n$ is a polytope. We restrict ourselves to the important case when $P$ is a $0/1$-polytope. Schulz and Weismantel showed that no more than $O(n \log n \|c\|_\infty)$ calls to an augmentation oracle are required. This upper bound can be improved to  $O(n \log \|c\|_\infty)$ using the early-stopping policy proposed in 2018 by Le Bodic, Pavelka, Pfetsch, and Pokutta. Considering both the maximum ratio augmentation variant of the method as well as its approximate version, we show that these upper bounds are essentially tight by maximizing over a $n$-dimensional simplex with vectors $c$ such that $\|c\|_\infty$ is either $n$ or $2^n$.
\end{abstract}
\maketitle

\section{Introduction}\label{DPP.sec.0}

The computational performance of linear optimization algorithms is closely related to the geometric properties of the feasible region. The combinatorial properties can also play an important role, in particular for integer optimization algorithms. Starting with the Klee--Minty cubes~\cite{KleeMinty1972} exhibiting an exponential number of simplex pivots, worst-case constructions have helped providing a deeper understanding of how the structural properties of the input affect the performance of linear optimization. Recent examples include the construction of Allamigeon, Benchimol, Gaubert, and Joswig~\cite{AllamigeonBenchimolGaubertJoswig2018, AllamigeonBenchimolGaubertJoswig2021} for which the primal-dual log-barrier interior point method performs an exponential number of iterations, and thus is not strongly polynomial. In a similar spirit, a lower bound on the number of simplex pivots required in the worst case to perform linear optimization on a lattice polytope has been recently established in \cite{DezaPournin2020,DezaPourninSukegawa2020}. In turn, a preprocessing and scaling algorithm has been proposed by Del Pia and Michini~\cite{DelPiaMichini2022} to construct simplex paths that are short relative to these lower bounds.

We focus on geometric scaling, an oracle based method introduced in \cite{SchulzWeismantel2002} for integer optimization on $0/1$-polytopes. Other classes of oracle based optimization methods are studied in \cite{DelPiaMichini2022,FrankTardos1987,ZaghroutiSoumisElHallaoui2014}. No worst-case instances have been given for geometric scaling to the best of our knowledge. In contrast, a tight lower bound has been provided by Le Bodic, Pavelka, Pfetsch, and Pokutta~\cite{LeBodicPavelkaPfetschPokutta2018} for bit scaling methods~\cite{SchulzWeismantelZiegler1995}. A $0/1$-polytope $P$ is the convex hull of a subset of the vertex set of the unit $n$\nobreakdash-dimensional hypercube $[0,1]^n$. Given a vector $c$ in $\mathbb{Z}^n$, we are interested in the following optimization problem: 
$$
\max\{c\mathord{\cdot}x:x\in{v(P)}\}\mbox{,}
$$
where $v(P)$ denotes the vertex set of $P$.

In order to solve that problem, geometric scaling methods perform a sequence of steps that can be of two kinds: starting from a vertex $\tilde{x}$ of $P$, an \emph{augmentation step} returns a point that belongs to a well-defined subset $\mathcal{S}_P(\mu,\tilde{x})$ of the vertices of $P$ such that $c\mathord{\cdot}x$ is greater than $c\mathord{\cdot}\tilde{x}$. The size of $\mathcal{S}_P(\mu,\tilde{x})$ is controlled by a parameter $\mu$. Roughly, the larger $\mu$, the smaller that subset is. When $\mu$ is very large, $\mathcal{S}_P(\mu,\tilde{x})$ may be empty and in that case, a \emph{halving step} divides $\mu$ by $2$ in order to enlarge $\mathcal{S}_P(\mu,\tilde{x})$. There are several variants of geometric scaling depending on which oracle is used to pick $x$ within $\mathcal{S}_P(\mu,\tilde{x})$ and we will focus on two of them, {\em maximum-ratio augmentation} (MRA) based geometric scaling and {\em feasibility} based geometric scaling. We show the following.
\begin{thm}
The maximum-ratio augmentation variant of geometric scaling can require $n+\log n\|c\|_\infty+1$ steps to maximize $c\mathord{\cdot}x$ over $P$ and the feasibility based variant can require $n/3+\log n\|c\|_\infty+1$ steps. 
\end{thm}

Refined upper bounds on the complexity of geometric scaling will also be given using the early stopping policy from~\cite{LeBodicPavelkaPfetschPokutta2018}. We will further highlight how the chosen oracle contributes to the complexity of geometric scaling by studying the complexity of a variant of feasibility based geometric scaling where, instead of dividing $\mu$ by $2$, halving steps divide $\mu$ by a positive parameter $\alpha$.

We recall how geometric scaling works and describe its two variants in Section~\ref{DPP.sec.1}. We refer the reader to~\cite{LeBodicPavelkaPfetschPokutta2018,Pokutta2020,SchulzWeismantel2002} for more comprehensive expositions. In Section~\ref{DPP.sec.2}, we show that maximum-ratio augmentation based geometric scaling can require $n+\log n\|c\|_\infty+1$ steps and in Section~\ref{DPP.sec.3} that feasibility based geometric scaling can require $n/3+\log n\|c\|_\infty+1$ steps. In Section \ref{DPP.sec.4}, we highlighting the tradeoff between the chosen amount of scaling and the accuracy of the feasibility oracle used in the implementation by studying a generalization of feasibility based geometric scaling where halving steps divide $\mu$ by an arbitrary positive number $\alpha$. Finally, we discuss upper bounds on the complexity of feasibility based geometric scaling in Section~\ref{DPP.sec.6} and show that these upper bounds are largely dependent on the performance of the oracle.

\section{Geometric scaling}\label{DPP.sec.1}
In this section, we recall the setup and some key properties of the geometric scaling algorithm described in~\cite{LeBodicPavelkaPfetschPokutta2018}. All the variants of geometric scaling are based on the general framework described by Algorithm~\ref{DPP.sec.1.alg.1}. Given an initial vertex $\tilde{x}$ of a $0/1$-polytope $P$, this algorithm uses a certain oracle $\mathcal{O}$ in order to return (in Line 3) another vertex $x$ of $P$ within the set
$$
\mathcal{S}_P(\mu,\tilde{x})=\Bigl\{x\in{v(P)}:c\mathord{\cdot}(x-\tilde{x})>\mu\|x-\tilde{x}\|_1\Bigr\}\mbox{.}
$$

It should be noted that $\mathcal{S}_P(\mu,\tilde{x})$ is a subset of the vertices $x$ of $P$ such that $c\mathord{\cdot}x$ is greater than $c\mathord{\cdot}\tilde{x}$. The extent of $\mathcal{S}_P(\mu,\tilde{x})$ is controlled by the parameter $\mu$: the smaller  $\mu$ is, the larger $\mathcal{S}_P(\mu,\tilde{x})$ gets and when $\mu$ is small enough then $\mathcal{S}_P(\mu,\tilde{x})$ is made up of all the vertices $x$ of $P$ such that $c\mathord{\cdot}x$ is greater than $c\mathord{\cdot}\tilde{x}$.
\begin{algorithm}[t]\label{DPP.sec.1.alg.1}
\KwInput{a $0/1$-polytope $P$ contained in $\mathbb{R}^n$,\newline a vector $c$ in $\mathbb{Z}^n$,\newline a vertex $x^0$ of $P$, and\newline a number $\mu_0$ greater than $\|c\|_\infty$.}
\KwOutput{A vertex $x^\star$ of $P$ that maximizes $c\mathord{\cdot}x$.}

$\mu\leftarrow\mu_0$, $\tilde{x}\leftarrow{x^0}$

\Repeat{$\mu<1/n$}{
{\bf compute} $x$ in $\mathcal{S}_P(\mu,\tilde{x})$ according to an oracle $\mathcal{O}$ 

\eIf{$\mathcal{S}_P(\mu,\tilde{x})$ is empty
}{$\mu\leftarrow\mu/2$\hfill(halving step)}{$\tilde{x}\leftarrow{x}$\hfill(augmenting step)}
}

Return $\tilde{x}$
\caption{Geometric scaling}
\end{algorithm}
If the oracle finds a point in $\mathcal{S}_P(\mu,\tilde{x})$, then $\tilde{x}$ is replaced by this point (in Line~7) and the procedure repeats. This is referred to as an \emph{augmenting step}. If however $\mathcal{S}_P(\mu,\tilde{x})$ is empty, it may either mean that $\mu$ is too large and prevents the algorithm to access to desirable vertices of $P$ or that $\tilde{x}$ is already optimal. In that case, the algorithm performs a \emph{halving step}: it divides $\mu$ by $2$ (in Line~5) and repeats. This goes on until $\mu$ is small enough to guarantee that $\mathcal{S}_P(\mu,\tilde{x})$ being empty implies the optimality of $\tilde{x}$. We refer the reader to~\cite{LeBodicPavelkaPfetschPokutta2018} for a proof that the stopping criterion in Line 9 of Algorithm~\ref{DPP.sec.1.alg.1} implies optimality.

In the remainder of the article, we will refer to a series of consecutive augmentation steps performed with same the value of $\mu$ as a \emph{scaling phase}, and to a series of consecutive halving steps as an \emph{halving phase}.

Let us turn our attention to the oracle $\mathcal{O}$ used in Line 3 of Algorithm~\ref{DPP.sec.1.alg.1}, which allows for several variants of that algorithm. In the following, we are especially interested in two variants. In the first variant, \emph{maximum-ratio augmentation} (or for short MRA) based geometric scaling, the oracle $\mathcal{O}$ in Line~3 of Algorithm~\ref{DPP.sec.1.alg.1} returns a point $x$ in $\mathcal{S}_P(\mu,\tilde{x})$ such that the ratio
$$
\frac{c\mathord{\cdot}(x-\tilde{x})}{\|x-\tilde{x}\|_1}
$$
is maximal. In the second variant, \emph{feasibility} based geometric scaling, the oracle $\mathcal{O}$ in Line 3 outputs any feasible point $x$ in $\mathcal{S}_P(\mu,\tilde{x})$.

The following remarks about geometric scaling hold for both the variants of Algorithm~\ref{DPP.sec.1.alg.1} that we consider here; for details we refer the interested reader to~\cite{LeBodicPavelkaPfetschPokutta2018}. In particular, the combination of these two remarks provides a slightly differentiated picture on the complexity we study here.

\begin{rem}[\cite{LeBodicPavelkaPfetschPokutta2018}] 
\label{DPP.sec.1.rem.1}
 The sequence of points $x^1$, $x^2$, \ldots, $x^k$ generated by geometric scaling is monotone with respect to the vector $c$:
$$
c\cdot x^1 < c\cdot x ^2 < \dots 
$$
\end{rem}

Note that this is very different from bit scaling, another augmentation-based optimization approach for $0/1$-polytopes introduced in \cite{SchulzWeismantelZiegler1995}, where points can be revisited in successive scaling phases and the sequence of generated points is not strictly increasing with respect to the original objective $c$. This fact also impacts the structure of our lower bounds: for bit scaling it was shown in~\cite{LeBodicPavelkaPfetschPokutta2018} that the number of required augmenting steps can depend on $\log \|c\|_\infty$ by making bit scaling revisit points. It will not be possible to do the same here and, in contrast to the bounds obtained for bit scaling, we will only be able to show that the total number of steps (the sum of the number of augmenting steps and the number of halving steps) depends on $\log \|c\|_\infty$. Our bounds for the number of required augmenting steps do not exceed $n$. 

\begin{rem}[\cite{LeBodicPavelkaPfetschPokutta2018}] 
\label{DPP.sec.1.rem.2}
Consider the value of $\mu$ taken before a halving step is performed. Either $\mu$ is equal to $\mu_0$ and then, by definition this is a lower bound or $\mu$ arose from a previous halving step. In that halving iteration, before the actual halving, we had for some iterate $\tilde x$: 
$$
\max_{y \in v(P)} c\mathord{\cdot}(y-\tilde{x}) \leq \mu\|y-\tilde{x}\|_1 \leq \mu n.
$$
\end{rem}

The worst-case complexity in the number of total steps for geometric scaling on $0/1$-polytopes is $O(n\log n\|c\|_\infty)$. The above two remarks allow to improve the worst-case complexity of geometric scaling slightly in the case of $0/1$-polytopes as shown in \cite{LeBodicPavelkaPfetschPokutta2018}. Observe that geometric scaling requires $O(n\log \|c\|_\infty)$ iterations until $\mu \leq 1/2$. According to Remark \ref{DPP.sec.1.rem.2}, we know that 
$$
\max_{y \in v(P)} c\mathord{\cdot}(y-\tilde{x}) \leq \mu\|y-\tilde{x}\|_1 \leq 2 \mu n \leq n
$$
and by Remark \ref{DPP.sec.1.rem.1}, we know that each augmentation improves $c\mathord{\cdot}x$ by at least $1$, so that the total number of iterations can be bounded as
$$
O(n\log \|c\|_\infty + n) = O(n\log \|c\|_\infty)
$$
iterations; we assume here that one would simply stop the algorithm after (at most) $n$ additional steps and does not continue performing unnecessary halving steps as we are guaranteed to be optimal. In the following, we will refer to these improved bounds as \emph{early stopping} bounds. With this we obtain the following upper bounds that we compare against.

\begin{prop}[\cite{LeBodicPavelkaPfetschPokutta2018}] 
\label{prop:upperBound}
Given a $0/1$-polytope $P$ of dimension at most $n$ and a vector $c$ from $\mathbb{Z}^n$, geometric scaling solves
$$
\max_{x \in v(P)} c\mathord{\cdot}x
$$
in no more than $O(n\log n \|c\|_\infty)$ steps using either variant of Algorithm~\ref{DPP.sec.1.alg.1} and no more than $O(n\log \|c\|_\infty)$ steps via early stopping.
\end{prop}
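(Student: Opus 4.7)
The sketch given in the paragraph immediately before the proposition essentially contains the proof, so my plan is to formalize it cleanly. The argument splits into a global count of halving steps, a per-phase count of augmenting steps, and, for the early-stopping bound, a tail-phase count based on the integrality of $c$.

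First I would count halving steps over the entire execution: $\mu$ starts above $\|c\|_\infty$, is halved at each halving step, and the loop terminates as soon as $\mu < 1/n$, so the total is at most $\lceil \log_2(n\mu_0)\rceil = O(\log n\|c\|_\infty)$ regardless of the interleaving with augmenting steps. Next I would bound the augmenting steps inside one scaling phase of value $\mu$. Applying Remark~\ref{DPP.sec.1.rem.2} to the halving step that opened the phase yields $\max_{y\in v(P)} c\mathord{\cdot}(y-\tilde x) \leq 2\mu n$ at the start of the phase, and the initial phase satisfies the analogous bound because $\mu_0 > \|c\|_\infty$ and $\|y-x^0\|_1 \leq n$. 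In both Algorithm~\ref{DPP.sec.1.alg.1} and Algorithm~\ref{DPP.sec.3.alg.1}, an augmenting step fires only when $c\mathord{\cdot}(x-\tilde x) \geq \mu\|x-\tilde x\|_1 \geq \mu$, so $c\mathord{\cdot}\tilde x$ increases by at least $\mu$ per augmenting step. Dividing the gap by $\mu$ gives at most $2n$ augmenting steps per phase, and multiplying by the number of phases produces the first bound $O(n\log n\|c\|_\infty)$.

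For the early-stopping bound, I would split the execution at the first moment when $\mu \leq 1/2$. The same analysis up to that point gives $O(\log\|c\|_\infty)$ halving steps and $O(n)$ augmenting steps per phase, hence $O(n\log\|c\|_\infty)$ iterations. From that moment on, Remark~\ref{DPP.sec.1.rem.2} forces a primal gap of at most $2\mu n \leq n$; because $c \in \mathbb{Z}^n$ and the iterates lie in $\{0,1\}^n$, Remark~\ref{DPP.sec.1.rem.1} upgrades the strict inequality $c\mathord{\cdot} x_{i+1} > c\mathord{\cdot} x_i$ to an integer improvement of at least $1$ per augmenting step, and the stopping rule discards the remaining halving steps. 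At most $n$ further augmenting steps are therefore possible, and the total is $O(n\log\|c\|_\infty) + n = O(n\log\|c\|_\infty)$.

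No real obstacle is expected here. The only delicate point is that in Algorithm~\ref{DPP.sec.1.alg.1} the vertex returned in Line~3 maximizes the ratio $c\mathord{\cdot}(x-\tilde x)/\|x-\tilde x\|_1$ rather than being an arbitrary witness of $c\mathord{\cdot}(x-\tilde x) \geq \mu\|x-\tilde x\|_1$, but the \textbf{if}-test in Line~4 discards iterations where the latter inequality fails, so the per-augmentation improvement of at least $\mu$ is still guaranteed. Everything else is routine bookkeeping over the two remarks.
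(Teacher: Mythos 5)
Your proposal is correct and follows essentially the same route as the paper, which does not give a formal proof of this cited proposition but sketches exactly this argument in the preceding paragraph: a global count of $O(\log n\|c\|_\infty)$ halving steps, an $O(n)$ bound on augmenting steps per scaling phase via Remark~\ref{DPP.sec.1.rem.2}, and, for early stopping, the primal gap bound $2\mu n \leq n$ combined with the integrality improvement from Remark~\ref{DPP.sec.1.rem.1}. Your formalization of the per-phase bound and of the base case for the initial phase is a faithful and correct filling-in of the details the paper leaves to the reference.
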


In light of the above discussion, it follows from Proposition~\ref{prop:upperBound} that using the early stopping variants reduces the number of required halving steps, and thus the lower bounds, by the $n$ term under $\log$.

\section{Worst-case instances for geometric scaling via MRA}\label{DPP.sec.2}
\begin{figure}[htb]
\begin{centering}
\includegraphics[scale=1]{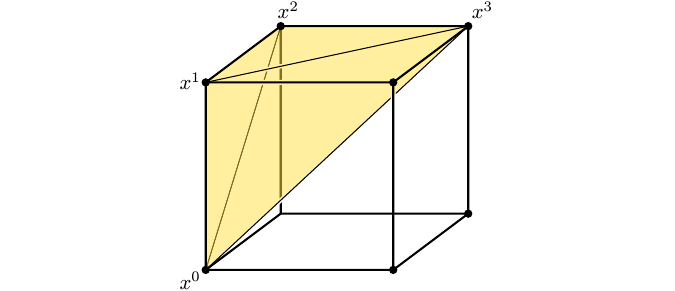}
\caption{ The simplex $S$ when $n=3$.}\label{DPP.sec.2.fig.1}
\end{centering}
\end{figure}

For any integer $i$ such that $0\leq{i}\leq{n}$, denote by $x^i$ the point in $\mathbb{R}^n$ whose last $i$ coordinates are equal to $1$ and whose other coordinates are equal to $0$. Note that $x^0$ is the origin of $\mathbb{R}^n$. This point will be our initial vertex for MRA based geometric scaling. Recall that, with this variant of Algorithm~\ref{DPP.sec.1.alg.1}, the point $x$ computed in Line 3 is a point such that $$
c\mathord{\cdot}(x-\tilde{x})/\|x-\tilde{x}\|_1
$$
is maximal. Consider the $n$-dimensional simplex $S$, illustrated in Figure~\ref{DPP.sec.2.fig.1} in the special case when $n=3$, whose vertices are the points $x^0$ to $x^n$. Further consider the vector $c$ whose $i$th coordinate is $i$:
$$
c=(1,2, \ldots, n)\mbox{.}
$$
In the remainder of the section, $S$ and $c$ are fixed as above, and we study how MRA based geometric scaling behaves when when $P$ is equal to $S$.

\begin{lem}\label{DPP.sec.2.lem.1}
If, during the execution of MRA based geometric scaling on the simplex $S$, the point $\tilde{x}$ is equal to $x^i$, then $\tilde{x}$ is set to $x^{i+1}$ by the next augmentation step, regardless of the value of $\mu$.
\end{lem}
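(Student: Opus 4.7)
My plan is to show that, as long as $\tilde{x}=x^i$, the vertex $x$ selected on Line~3 of Algorithm~\ref{DPP.sec.1.alg.1} is always $x^{i+1}$ (independent of $\mu$), so that whenever the condition on Line~4 fails the augmentation step sends $\tilde{x}$ to $x^{i+1}$. Since $\tilde{x}$ does not change during halving steps, this forces the next augmentation (if any) to produce $x^{i+1}$.

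The bulk of the work is a short calculation. First I would record that $\|x^j-x^i\|_1=|j-i|$, because $x^i$ and $x^j$ differ exactly in the coordinates indexed from $n-\max(i,j)+1$ to $n-\min(i,j)$. Then, using $c\mathord{\cdot}x^j=\sum_{k=n-j+1}^{n}k$, I would compute, for every $j>i$,
$$
\frac{c\mathord{\cdot}(x^j-x^i)}{\|x^j-x^i\|_1}=\frac{1}{j-i}\sum_{k=n-j+1}^{n-i}k=\frac{2n-j-i+1}{2},
$$
and observe that for $j<i$ the ratio is negative. The right-hand side is strictly decreasing in $j$, so the ratio is uniquely maximized at $j=i+1$, with value $n-i$.

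Having established uniqueness of the maximizer, I would conclude that Line~3 returns $x=x^{i+1}$ for every iteration with $\tilde{x}=x^i$. The test on Line~4 then triggers a halving step precisely when $\mu>n-i$ and an augmentation step precisely when $\mu\leq n-i$; in the latter case $\tilde{x}$ becomes $x^{i+1}$. Because $\tilde{x}$ remains equal to $x^i$ throughout any intermediate halving phase, the very next augmentation step must assign $x^{i+1}$ to $\tilde{x}$, as claimed.

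I do not foresee a real obstacle here: the statement ultimately rests on the fact that the maximum-ratio direction from $x^i$ points to its immediate neighbor $x^{i+1}$, and the simplex structure together with the specific choice $c=(1,2,\dots,n)$ makes the ratios explicit. The only mildly delicate point is confirming that the maximum is \emph{strict}, so that no tie-breaking rule on Line~3 can alter the outcome; this is immediate from the expression $(2n-j-i+1)/2$ being strictly decreasing in~$j$.
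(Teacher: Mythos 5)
Your proof is correct and follows essentially the same route as the paper's: the ratio $c\mathord{\cdot}(x^j-x^i)/\|x^j-x^i\|_1$ is negative for $j<i$ and, for $j>i$, is an average of the coordinates $c_{n-i},\dots,c_{n-j+1}$, which is uniquely maximized at $j=i+1$. The only cosmetic difference is that you evaluate this average in closed form as $(2n-i-j+1)/2$, whereas the paper bounds it by its largest term $c_{n-i}$ with equality only when $j=i+1$; both yield the same strict maximizer.
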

\begin{proof}
Let us compute the value of
\begin{equation}\label{DPP.sec.2.lem.1.eq.1}
\frac{c\mathord{\cdot}(x^j-x^i)}{\|x^j-x^i\|_1}
\end{equation}
where $j\neq{i}$. If $j$ is less than $i$, then $x^j-x^i$ has no positive coordinate and at least one negative coordinate. As a consequence, $c\mathord{\cdot}(x^j-x^i)$ is negative, as well as the ratio (\ref{DPP.sec.2.lem.1.eq.1}). If $j$ is greater than $i$, then
$$
\frac{c\mathord{\cdot}(x^j-x^i)}{\|x^j-x^i\|_1}=\frac{1}{j-i}\sum_{k=i}^{j-1}c_{d-k}\mbox{,}
$$
where $c_{d-k}$ is the $(d-k)$th coordinate of $c$. As $c_{d-i}>c_{n-k}$ when $k>i$, 
$$
\frac{c\mathord{\cdot}(x^j-x^i)}{\|x^j-x^i\|_1}\leq{c_{n-i}}\mbox{,}
$$
with equality if and only if $j=i+1$. In other words, when $j>i+1$
$$
\frac{c\mathord{\cdot}(x^j-x^i)}{\|x^j-x^i\|_1}<\frac{c\mathord{\cdot}(x^{i+1}-x^i)}{\|x^{i+1}-x^i\|_1}\mbox{.}
$$

Therefore, if at the beginning of a step during the execution of MRA based geometric scaling, $\tilde{x}$ is equal to $x^i$ where $i<n$, then $x$ will be set to $x^{i+1}$ in Line 3, and the next augmentation will set $\tilde{x}$ to $x^{i+1}$ as announced. 
\end{proof}

\begin{thm}\label{DPP.sec.2.thm.1}
Starting at the origin of $\mathbb{R}^n$, MRA based geometric scaling requires $n$ augmentation steps and $\log n\|c\|_\infty+1$ halving steps in order to maximize $c\mathord{\cdot}x$ over $S$. With the early stopping policy, the number of required halving steps decreases to $\log \|c\|_\infty+1$.
\end{thm}
\begin{proof}
Note that the optimal solution of the problem is $x^n$. According to Lemma~\ref{DPP.sec.2.lem.1}, the algorithm performs $n$ augmenting steps to reach $x^n$ from $x^0$. As a consequence, it suffices to observe that this algorithm performs at least $\log n\|c\|_\infty+1$ halving steps in order to scale $\mu$ down to less than $1/n$.
\end{proof}

\section{Worst-case instances for feasibility based geometric scaling}\label{DPP.sec.3}

Let us now consider feasibility based geometric scaling, the variant of Algorithm~\ref{DPP.sec.1.alg.1} that uses the feasibility based oracle. In that variant, the point $x$ computed in Line 3 of Algorithm~\ref{DPP.sec.1.alg.1} can be any vertex of $P$ that satisfies
$$
c\mathord{\cdot}(x-\tilde{x})>\mu\|x-\tilde{x}\|_1\mbox{.}
$$

In particular, $x$ is possibly not a maximizer of the ratio 
$$
\frac{c\mathord{\cdot}(x-\tilde{x})}{\mu\|x-\tilde{x}\|_1}\mbox{.}
$$


We show that feasibility based geometric scaling can require
$$
n/3+\log{n\|c\|_\infty}+1
$$
steps to reach optimality. In order to do that, we will use the same simplex $S$ as in Section \ref{DPP.sec.2}, with vertices $x^0$ to $x^n$ but a different vector $c$ whose coordinates are exponential. More precisely, $c$ is the vector whose $i$th coordinate is $2^i$:
$$
c=(2,4, \ldots, 2^n)\mbox{.}
$$

Note that, as in Section \ref{DPP.sec.2}, we will start the algorithm at vertex $x^0$.

\begin{lem}\label{DPP.sec.3.lem.1}
Assume that, at the start of a step during the execution of feasibility based geometric scaling on $S$, $\tilde{x}$ is equal to $x^i$. If, in addition,
$$
\mu<c_{n-i}\leq{2\mu}
$$
then the step ends with an augmentation that sets $\tilde{x}$ to $x^{i+1}$, $x^{i+2}$, or $x^{i+3}$.
\end{lem}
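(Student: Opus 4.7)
The plan is to classify, for each vertex $x^j$ of $S$ with $j\neq i$, whether the augmentation inequality
\[
c\mathord{\cdot}(x^j-x^i) > \mu\|x^j-x^i\|_1
\]
can possibly hold under the hypothesis $c_{n-i}/2 \leq \mu < c_{n-i}$. Once I have shown that only the indices $j \in \{i+1,i+2,i+3\}$ can pass this test, the lemma will follow: whatever vertex the oracle in Line~3 of Algorithm~\ref{DPP.sec.3.alg.1} returns must be one of these three, and since at least one of them is feasible the step must augment rather than halve.

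First I would dispatch $j < i$ immediately: the vector $x^j - x^i$ has only nonpositive coordinates with at least one strictly negative entry, while $c > 0$, so $c\mathord{\cdot}(x^j-x^i)$ is negative and the inequality fails. For $j = i+s$ with $s \geq 1$, the nonzero entries of $x^j - x^i$ are exactly the $+1$'s in coordinates $n-i-s+1,\ldots,n-i$, which yields, after collapsing the geometric sum, the clean closed form
\[
\frac{c\mathord{\cdot}(x^{i+s}-x^i)}{\|x^{i+s}-x^i\|_1} \;=\; \frac{2^{n-i+1}-2^{n-i-s+1}}{s} \;=\; \frac{2\,c_{n-i}(1-2^{-s})}{s}.
\]
Plugging in $s=1$ yields exactly $c_{n-i}$, which strictly exceeds $\mu$ by hypothesis, so the feasibility test in Line~3 is nonempty and halving is ruled out.

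It then remains to exclude every $s \geq 4$. Using $\mu \geq c_{n-i}/2$, admissibility of $x^{i+s}$ would force $2\,c_{n-i}(1-2^{-s})/s > c_{n-i}/2$, i.e.\ $4 - 2^{2-s} > s$. For $s=4$ the left-hand side equals $15/4 < 4$, and for $s > 4$ it is strictly less than $4 \leq s$; the inequality fails in every case. Hence the oracle can only return a vertex in $\{x^{i+1},x^{i+2},x^{i+3}\}$, which is exactly what the lemma asserts. The whole argument is essentially a routine calculation; the one place that requires attention is to bound the $s \geq 4$ ratios using the \emph{weakest} permissible value of $\mu$, namely $\mu = c_{n-i}/2$, since smaller $\mu$ makes the augmentation condition easier to satisfy and hence yields the tightest obstruction.
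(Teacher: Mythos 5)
Your proposal is correct and follows essentially the same route as the paper: compute the ratio $c\mathord{\cdot}(x^{i+s}-x^i)/\|x^{i+s}-x^i\|_1$ in closed form, observe that it equals $c_{n-i}>\mu$ at $s=1$ (forcing an augmentation), and use the bound $\mu\geq c_{n-i}/2$ to show the augmentation condition fails for all $s\geq 4$. The paper packages the last step as the observation that $(1-2^{-t})/t<1/4$ for $t\geq 4$, which is exactly your inequality $4-2^{2-s}>s$ in disguise.
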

\begin{proof}
We proceed as in the proof of Lemma~\ref{DPP.sec.2.lem.1} by computing
\begin{equation}\label{DPP.sec.3.lem.1.eq.1}
\frac{c\mathord{\cdot}(x^j-x^i)}{\|x^j-x^i\|_1}
\end{equation}
when $j\neq{i}$. If $j<i$, this ratio is negative because $x^j-x^i$ has at least one negative coordinate and none of its coordinates is positive. In particular, the next augmentation cannot set $\tilde{x}$ to $x^j$. Now assume that $j>i$. In this case,
$$
\begin{array}{rcl}
\displaystyle\frac{c\mathord{\cdot}(x^j-x^i)}{\|x^j-x^i\|_1} & = & \displaystyle\frac{1}{j-i}\sum_{k=i}^{j-1}c_{n-k}\mbox{,}\\[2\medskipamount]
& = & \displaystyle\frac{1}{j-i}\!\left(\sum_{k=i}^n2^{n-k}-\sum_{k=j}^n2^{n-k}\right)\!\!\mbox{,}\\[3\medskipamount]
& = & \displaystyle\frac{2^{n-i+1}-2^{n-j+1}}{j-i}\mbox{,}\\[\bigskipamount]
& = & \displaystyle2^n\frac{2^{1-i}-2^{1-j}}{j-i}\mbox{.}\\[\bigskipamount]
\end{array}
$$

If in addition $\mu<c_{n-i}\leq{2\mu}$, then
$$
2^i\mu<2^n\leq2^{i+1}\mu\mbox{.}
$$

As a consequence,
$$
2\frac{1-2^{i-j}}{j-i}\mu<\frac{c\mathord{\cdot}(x^j-x^i)}{\|x^j-x^i\|_1}\leq4\frac{1-2^{i-j}}{j-i}\mu\mbox{.}
$$

As the ratio $(1-2^{-t})/t$ is less than $1/4$ when $t$ belongs to $[4,+\infty[$, the step cannot end with an augmentation that sets $\tilde{x}$ to $x^j$ where $j\geq{i+4}$. Now observe that this ratio is equal to $1/2$ when $t$ is equal to $1$. Hence,
$$
\frac{c\mathord{\cdot}(x^{i+1}-x^i)}{\|x^{i+1}-x^i\|_1}>\mu\mbox{.}
$$

This proves that the step will end by an augmentation that sets $\tilde{x}$ to one of the vertices $x^{i+1}$, $x^{i+2}$, or $x^{i+3}$, as desired.
\end{proof}

\begin{thm}\label{DPP.sec.3.thm.1}
Starting at the origin of $\mathbb{R}^n$, feasibility based geometric scaling requires $n/3$ augmentation steps and $\log n\|c\|_\infty+1$ halving steps in order to maximize $c\mathord{\cdot}x$ over $S$. With the early stopping policy, the number of required halving steps decreases to $\log \|c\|_\infty+1$.
\end{thm}
\begin{proof}
Observe again that the algorithm performs at least $\log n\|c\|_\infty+1$ halving steps. Theorem~\ref{DPP.sec.3.thm.1} then follows from Lemma~\ref{DPP.sec.3.lem.1} and from the observation that, after a halving step where $\tilde{x}$ is equal to $x^i$, either $c_{n-i}$ is less than $\mu$ (in which case the next step is also a halving step) or satisfies $\mu<c_{n-i}\leq{2\mu}$.
\end{proof}

\section{The tradeoff between scaling and oracle accuracy}\label{DPP.sec.4}

In this section, we consider a generalization of feasibility based geometric scaling where, in Line 5 of Algorithm \ref{DPP.sec.1.alg.1}, $\mu$ is divided by $\alpha$ instead of by $2$. This modified algorithm will be referred to as \emph{generalized feasibility based geometric scaling}. Note that feasibility based geometric scaling is recovered simply by setting $\alpha=2$. Whole $\mu$ is no longer halved, we still refer to this operation as a halving step. The parameter $\alpha$ controls the amount of both augmenting and halving steps performed by the algorithm. If $\alpha$ is close to $1$, then only a small region is made feasible after each halving step. In this case, the feasibility oracle in Line 3 of Algorithm \ref{DPP.sec.1.alg.1} has few choices for feasible solutions and its ability to find the best possible feasible point is not important. If, on the contrary $\alpha$ is large, then many new points will be feasible after each halving step. In fact, for large enough values of $\alpha$, the algorithm will be completely {\em descaled} as all the vertices $x$ of the polytope such that $c\mathord{\cdot}x$ is greater than $c\mathord{\cdot}\tilde{x}$ will be made feasible after the first halving step. In this case, the number of steps required to reach an optimal solution is completely determined by the ability of the feasibility oracle (called in Line 3 in Algorithm~\ref{DPP.sec.1.alg.1}) to reach optimality. In other words, $\alpha$ also controls whether the complexity of the procedure is mainly due to the augmenting steps or to the accuracy of the feasibility oracle.

It turns out that $\alpha$ also explains the gap between the lower bounds provided by Theorems \ref{DPP.sec.2.thm.1} and \ref{DPP.sec.3.thm.1} on the complexity of geometric scaling. In particular, we will show how the term $n/3$ in the latter lower bound depends on $\alpha$.

We consider, again, the same simplex $S$ as in Sections \ref{DPP.sec.2} and \ref{DPP.sec.3} but use an objective vector whose $i$th coordinate is $\lceil\alpha\rceil^i$:
$$
c=(\lceil\alpha\rceil, \lceil\alpha\rceil^2, \ldots, \lceil\alpha\rceil^n)\mbox{.}
$$

\begin{lem}\label{DPP.sec.4.lem.1}
Assume that, at the start of some step during the execution of generalized feasibility based geometric scaling, $\tilde{x}$ is equal to $x^i$. If in addition,
$$
\mu<c_{n-i}\leq\alpha\mu\mbox{,}
$$
then that step ends with an augmentation that sets $\tilde{x}$ to $x^j$ where $j>i$ and
\begin{equation}\label{DPP.sec.4.lem.1.eq.0}
\alpha\lceil\alpha\rceil\frac{1-\lceil\alpha\rceil^{i-j}}{j-i}>1\mbox{.}
\end{equation}
\end{lem}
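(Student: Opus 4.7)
The plan is to follow the template of Lemma~\ref{DPP.sec.3.lem.1}: compute the ratio $c\mathord{\cdot}(x^j-x^i)/\|x^j-x^i\|_1$ exactly for each candidate $j$, then use the hypothesis $\mu<c_{n-i}\leq\alpha\mu$ to convert the augmentation condition into (\ref{DPP.sec.4.lem.1.eq.0}). Writing $r=\lceil\alpha\rceil$, the one new ingredient compared to Lemma~\ref{DPP.sec.3.lem.1} is that the objective coefficients form a geometric sequence with ratio $r$ rather than $2$, so the geometric-series identity produces an extra factor of $(r-1)^{-1}$ that will be absorbed at the end.

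First I would rule out the cases $j\leq i$. If $j<i$, then $x^j-x^i$ has no positive coordinate and at least one strictly negative one, so $c\mathord{\cdot}(x^j-x^i)<0$ and the feasibility test in Line~3 fails; for $j=i$ the test reduces to $0>0$ and fails trivially. Next I would confirm that the step really is an augmentation, not a halving: taking $j=i+1$ gives ratio $c_{n-i}>\mu$ by hypothesis, so Line~3 admits a feasible vertex.

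The core calculation is the geometric-sum identity
$$\frac{c\mathord{\cdot}(x^j-x^i)}{\|x^j-x^i\|_1}=\frac{1}{j-i}\sum_{k=i}^{j-1}r^{n-k}=\frac{r\cdot r^{n-i}\bigl(1-r^{i-j}\bigr)}{(j-i)(r-1)}\mbox{,}$$
valid for $j>i$. If the augmentation selects $x^j$, this ratio must exceed $\mu$; substituting $r^{n-i}=c_{n-i}\leq\alpha\mu$ on the right and dividing by $\mu$ then yields
$$\frac{r\alpha\bigl(1-r^{i-j}\bigr)}{(j-i)(r-1)}>1\mbox{.}$$
Since $\alpha>1$ (otherwise halving never decreases $\mu$) we have $r\geq 2$, so $r-1\geq 1$, and the displayed inequality implies (\ref{DPP.sec.4.lem.1.eq.0}). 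The only point worth any care is the algebraic handling of the geometric sum and the observation that the extra $1/(r-1)$ factor is harmlessly $\leq 1$ under the standing assumption $\alpha>1$, which is why the statement drops it; otherwise the argument is a routine generalization of Lemma~\ref{DPP.sec.3.lem.1} and I do not foresee any genuine obstacle.
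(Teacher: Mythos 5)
Your proposal is correct and follows essentially the same route as the paper: rule out $j\leq i$, verify feasibility at $j=i+1$ to guarantee an augmentation, evaluate the ratio via the geometric sum, and convert the feasibility threshold $\mu$ into the inequality~(\ref{DPP.sec.4.lem.1.eq.0}) using $c_{n-i}\leq\alpha\mu$. The one place you are actually more careful than the paper is the $1/(\lceil\alpha\rceil-1)$ factor coming from the geometric series: the paper's displayed identity silently reuses the Lemma~\ref{DPP.sec.3.lem.1} formula (valid only for $\lceil\alpha\rceil=2$), whereas you keep the factor and correctly observe that it is at most $1$ and hence harmlessly absorbed, which is exactly why the stated inequality survives for all $\alpha>1$.
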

\begin{proof}
Let us compute the ratio
\begin{equation}\label{DPP.sec.4.lem.1.eq.1}
\frac{c\mathord{\cdot}(x^j-x^i)}{\|x^j-x^i\|_1}
\end{equation}
when $j\neq{i}$. As in the proof of Lemma \ref{DPP.sec.3.lem.1}, this ratio is negative when $j<i$. In that case, the next augmentation will not set $\tilde{x}$ to $x^j$. If, on the contrary, $j>i$ then the same calculation as in the proof of Lemma \ref{DPP.sec.3.lem.1} yields
$$
\frac{c\mathord{\cdot}(x^j-x^i)}{\|x^j-x^i\|_1}=\lceil\alpha\rceil^n\frac{\lceil\alpha\rceil^{1-i}-\lceil\alpha\rceil^{1-j}}{j-i}\mbox{.}
$$

Now assume that $\mu<c_{n-i}\leq\alpha\mu$. In that case,
$$
\lceil\alpha\rceil^i\mu<\lceil\alpha\rceil^n\leq\alpha\lceil\alpha\rceil^i\mu\mbox{,}
$$
and it immediately follows that
$$
\lceil\alpha\rceil\frac{1-\lceil\alpha\rceil^{i-j}}{j-i}\mu<\frac{c\mathord{\cdot}(x^j-x^i)}{\|x^j-x^i\|_1}\leq\alpha\lceil\alpha\rceil\frac{1-\lceil\alpha\rceil^{i-j}}{j-i}\mu\mbox{.}
$$

First observe that, when $j=i+1$, the first inequality is
$$
(\lceil\alpha\rceil-1)\mu<\frac{c\mathord{\cdot}(x^{i+1}-x^i)}{\|x^{i+1}-x^i\|_1}\mbox{.}
$$

As $\alpha>1$, it follows that the step will end by an augmentation. Moreover that augmentation can set $\tilde{x}$ to $x^{i+1}$. Finally, if the augmentation sets $\tilde{x}$ to $x^j$, then $j$ must satisfy (\ref{DPP.sec.4.lem.1.eq.0}) by the second inequality.
\end{proof}

Now denote by $\omega_\alpha$ the number of integers $t$ such that
$$
\alpha\lceil\alpha\rceil\frac{1-\lceil\alpha\rceil^{-t}}{t}>1\mbox{.}
$$

As already noted in the proof of Lemma \ref{DPP.sec.4.lem.1}, that inequality is always satisfied when $t=1$ because $\alpha>1$, and thus  $\omega_\alpha\geq 1$. One can check that the first few values of $\omega_\alpha$ are $\omega_\alpha=1$ when
$$
1<\alpha\leq\frac{4}{3}\mbox{,}
$$
$\omega_\alpha=2$ when
$$
\frac{4}{3}<\alpha\leq\frac{12}{7}\mbox{,}
$$
and $\omega_\alpha=3$ when
$$
\frac{12}{7}<\alpha\leq2\mbox{.}
$$

Then, $\omega_\alpha$ jumps to $6$ when 
$$
2<\alpha\leq\frac{729}{364}
$$
because $\lceil\alpha\rceil$ is no longer equal to $2$, but to $3$. Further note that $\omega_\alpha$ grows like $\alpha^2$ when $\alpha$ goes to infinity.

\begin{thm}\label{DPP.sec.4.thm.1}
Starting at the origin of $\mathbb{R}^n$, generalized feasibility based geometric scaling requires $n/\omega_\alpha$ augmentation steps and $\log n\|c\|_\infty+1$ halving steps to maximize $c\mathord{\cdot}x$ over $S$. With the early stopping policy, the number of required halving steps decreases to $\log \|c\|_\infty+1$.
\end{thm}
\begin{proof}
Recall that generalized feasibility based geometric scaling is identical to feasibility based geometric scaling, except that $\mu$ is divided by $\alpha$ in Line 5 of Algorithm \ref{DPP.sec.1.alg.1}. Therefore, it still performs $\log n\|c\|_\infty+1$ halving steps. The theorem is then a consequence of Lemma~\ref{DPP.sec.4.lem.1}. Indeed, as $\lceil\alpha\rceil\geq\alpha$, after a halving step where $\tilde{x}$ is equal to $x^i$, either $c_{n-i}$ is less than $\mu$ (in which case the next step is also an halving step) or satisfies $\mu<c_{n-i}\leq{\alpha\mu}$ (in which case the next step is an augmenting step) and in the latter case, it follows from Lemma~\ref{DPP.sec.4.lem.1} that at most $\omega_\alpha$ vertices of $S$ are feasible.
\end{proof}

Note that Theorem~\ref{DPP.sec.3.thm.1} is the special case of Theorem~\ref{DPP.sec.4.thm.1} obtained when $\alpha=2$. Indeed, in this case, $\omega_\alpha$ is equal to $3$ and, therefore at most three new vertices are made feasible after each halving step. However, choosing $\alpha=4/3$ (or, in fact, any $\alpha$ satisfying $1<\alpha\leq{4/3}$) provides Corollary~\ref{colocolo} because in that case, $\omega_\alpha$ is only equal to $1$. More precisely, just as MRA based geometric scaling requires $n$ augmentation steps with the vector
$$
c=(1,2,\dots,n)\mbox{,}
$$
generalized feasibility based geometric scaling requires $n$ augmentation steps in order to maximize $c\mathord{\cdot}x$ over $S$ when $\alpha$ is equal to $4/3$ and
$$
c=\!\left(\!\left\lceil\frac{4}{3}\right\rceil\!, \!\left\lceil\frac{4}{3}\right\rceil^2, \ldots, \!\left\lceil\frac{4}{3}\right\rceil^n\right)\!=(2,4,8,\dots, 2^n)\mbox{.}
$$

\begin{cor}\label{colocolo}
If $\alpha$ is equal to $4/3$ and
$$
c=(2,4,8,\dots, 2^n)\mbox{,}
$$
then, starting at the origin of $\mathbb{R}^n$, generalized feasibility based geometric scaling requires $n$ augmentation steps and $\log n\|c\|_\infty+1$ halving steps to maximize $c\mathord{\cdot}x$ over $S$. With early stopping, only $\log \|c\|_\infty+1$ halving steps are required.
\end{cor}

\section{A remark on upper bounds}\label{DPP.sec.6} 

It is shown in \cite{LeBodicPavelkaPfetschPokutta2018} that the number of augmentation and halving steps performed by feasibility based geometric scaling is always at most $O(n\log \|c\|_\infty)$. This bounds relies on a result from \cite{SchulzWeismantel2002} whereby the algorithm performs at most $O(n)$ augmentations between two consecutive halving steps. However, recall that with feasibility based geometric scaling, the oracle called at Line 3 in Algorithm~\ref{DPP.sec.1.alg.1} can pick any vertex $x$ of $P$ in $\mathcal{S}_P(\mu,\tilde{x})$. We show that in fact, the oracle can always pick $x$ such that at most one augmentation is performed between any two consecutive halving steps.

\begin{lem}\label{DPP.sec.6.lem.1}
If at the beginning of a step during the execution of feasibility based geometric scaling, the set $\mathcal{S}_P(\mu,\tilde{x})$ is non-empty, then $\mathcal{S}_P(\mu,\tilde{x})$ contains a point $x$ such that $\mathcal{S}_P(\mu,x)$ is empty.
\end{lem}
\begin{proof}
Assume that $\mathcal{S}_P(\mu,\tilde{x})$ is non-empty at the beginning of a step during the execution of feasibility based geometric scaling. It suffices to show that for any point $x$ in $\mathcal{S}_P(\mu,\tilde{x})$ the set $\mathcal{S}_P(\mu,x)$ is contained in $\mathcal{S}_P(\mu,\tilde{x})$. Indeed, this implies that, if $\mathcal{S}_P(\mu,x)$ is non-empty, any of the points it contains could have been picked by the oracle instead of $x$. Since $\mathcal{S}_P(\mu,\tilde{x})$ is non-empty and $c\mathord{\cdot}y$ is greater than $c\mathord{\cdot}x$ for any point $y$ in $\mathcal{S}_P(\mu,x)$, this shows that the oracle can always pick $x$ in such a way that $\mathcal{S}_P(\mu,x)$ is empty.

For any point $x$ in $\mathcal{S}_P(\mu,\tilde{x})$,
$$
c\mathord{\cdot}(x-\tilde{x})>\mu\|x-\tilde{x}\|_1
$$
and for any point $y$ in $\mathcal{S}_P(\mu,x)$,
$$
c\mathord{\cdot}(y-x)>\mu\|y-x\|_1\mbox{.}
$$

Summing these two equalities yields
\begin{equation}\label{DPP.sec.6.lem.1.eq.1}
c\mathord{\cdot}(y-\tilde{x})>\mu(\|y-x\|_1+\|x-\tilde{x}\|_1)\mbox{.}
\end{equation}

However, by the triangle inequality, the right-hand side of (\ref{DPP.sec.6.lem.1.eq.1}) is at least $\mu\|y-\tilde{x}\|_1$ and as a consequence, $y$ belongs to $\mathcal{S}_P(\mu,\tilde{x})$, as desired.
\end{proof}

Now recall that any variant of geometric scaling performs at most $\log \|c\|_\infty+1$ halving steps. Hence, we get the following from Lemma \ref{DPP.sec.6.lem.1}.

\begin{thm}\label{DPP.sec.6.thm.1}
There always is an execution of feasibility based geometric scaling that performs at most $2\log \|c\|_\infty+2$ augmentation and halving steps.
\end{thm}

The gap between this bound and the $O(n\log \|c\|_\infty)$ bound from \cite{LeBodicPavelkaPfetschPokutta2018} illustrates the critical role of the oracle for geometric scaling algorithms.

\medskip

\noindent{\bf Acknowledgments. }
We thank the 2021 HIM program Discrete Optimization during which part of this work was developed.

\bibliography{GeometricScaling}
\bibliographystyle{ijmart}

\end{document}